\documentclass[twoside,12pt]{article}

\usepackage{amsfonts}
\usepackage{amssymb}
\usepackage{amsmath}
\usepackage{latexsym}
\usepackage{amsthm}
\usepackage{amsfonts}
\usepackage{graphicx}
\usepackage{amsthm}

\newtheorem{theorem}{Theorem}[section]
\newtheorem{corollary}[theorem]{Corollary}
\newtheorem{conjecture}[theorem]{Conjecture}
\newtheorem{lemma}[theorem]{Lemma}

\newtheorem{definition}[theorem]{Definition}

\title{Some heterochromatic theorems for matroids}
 
\author{Criel Merino \and Juan Jos\'e Montellano-Ballesteros} 
\date{\today}
 
\begin{document}
 \maketitle
 \thispagestyle{empty} 

\begin{abstract}
	   The anti-Ramsey number of Erd\"os, Simonovits and S\'os from 1973 has become a classic invariant in Graph Theory. 
	   To study this invariant in Matroid Theory, we use a related invariant introduce by Arocha, Bracho and Neu\-mann-\-Lara.
	    The  heterochromatic number $hc(H)$ of a non-empty hypergraph $H$ is the smallest integer $k$ such that for every
	     colouring of the vertices of $H$ with exactly $k$ colours, there is a totally multicoloured hyperedge of $H$.
	     
 	Given a rank-$r$ matroid $M$, there are several hypergraphs associated to the matroid that we can consider. One is $C(M)	$, the hypergraph where the points are the elements of the matroid and the hyperedges are the circuits of $M$. 
	The other one is $B(M)$, where here the points are the elements and the hyperedges are the bases of the matroid.
 
        We prove that $hc(C(M))$  equals  $r+1$ when $M$ is not the free matroid $U_{n,n}$, and that if $M$ is a paving 
        matroid, then $hc(B(M))$ equals $r$. Then we explore the case when the hypergraph has the Hamiltonian circuits
         of the matroid as hyperedges, if any, for a class of paving matroids. We also extend the trivial observation of Erd\"os, 
         Simonovits and S\'os for the anti-Ramsey number for 3-cycles to 3-circuits in  projective geometries over finite fields.
\end{abstract}

\section{Introduction}

For an interger $n$ and a graph $H$, the classical anti-Ramsey number $ar(n,H)$ is the maximum number of 
 colours in an edge colouring of a complete graph $K_n$ with no totally multicoloured copy of $H$. It was introduced by
  Erd\"os, Simonovits and S\'os in~\cite{ErSiSo1973}. 
  
  In that paper they proved that $ar(n,C_3)=n-1$ and conjectured that
  $$
      ar(n,C_p)=n\left( \frac{p-2}{2}+\frac{1}{p-1}\right) +O(1)
      $$
      This was proved by Montellano and Neumann-Lara in 2005, see~\cite{MoNe2005}. It is worth mentined that 
      in~\cite{ErSiSo1973}, the proof of the $ar(n,C_3)$ is consider trivial, however is quite beautiful and it has inspired us to write 
      an extension to projective geometries. 
      
      The study of anti-Ramsey number for types of graphs $H$ include cycles~\cite{Alon1983,MoNe2000,JiWe2003}, 
      paths~\cite{SiSo1984},  $t$ disjoint $K_2$ or  matchings~\cite{Schiermeyer2004,FuKaScSu2009,ChLiTu2009}, 
      $t$ disjoint cycles~\cite{JiLi2009}, 
      trees~\cite{JiWe2004}, cliques~\cite{ErSiSo1973,MoNe2002,Schiermeyer2004}, 
      stars~\cite{Jiang2002,Montellano2006,MaSpTuVo1996} and
       others. For a survey of the results of these clases and numerous generalizations of the anti-Ramsey number we 
       refereed the reader to~\cite{FuMaOz2014}.
          
      One possible generalization was introduced by  Arocha, Bracho and Neu\-mann-\-Lara  in~\cite{ArBrNe1992} 
      when they defined the heterochromatic number for hypergraphs. 
      A $t$-coloring of a hypergraph $H=(V,E)$  is an onto function from $V$ to the set $[t]$. That is, a $t$-colouring 
      has to use all the colours. A $t$-coloring $\phi$ of $H$ is {\it heterocromatic} if there exist some hyperedge $e\in E$ 
      such that the image of the vertices in  $e$ under $\phi$ are all distinct. That is, a $t$-colouring is heterochromatic if 
      there is a totally multicoloured hyperedge. 
 
   \begin{definition}
   	For a hypergraph $H$ we define  $hc(H)$ to be the minimum value  $t$ for which all $t$-coloring of  $H$ is heterocromatic. 
   \end{definition}
   
   Clearly, $hc(H)\leq |V|$ and if $H'$ is an spanning sub-hypergraph of $H$, then $hc(H')\geq hc(H)$. 
   In~\cite{ArBrNe1992}, the auhors were interested in the extremal case when $hc(H)$ is as small as posible and 
   made the following definition. When the hypergraph has as vertices the edges of the complete graph  $K_n$ and the 
   hyperedges are the set of edges of the a subgraph isomorphic to a fixed graph $H$, the heterocromatic number of the hypergraph is the anti-Ramsey number $ar(n,H)$ plus one. From now on, we concentrate on the heterocromatic number. 
 
   \begin{definition}
	 An $r$-uniforme  $H$ hypergraph is tight if $hc(H)=r$. 
   \end{definition}
 
   Here he study the problem of finding the heterochromatic number for hypergraphs coming from matroids. 
   This has been consider before in~\cite{MoRi2013} but just partially. Here we concentrate on  hypergraph of basis  for  
   paving matroids in Section~\ref{paving} and the hypergraph of circuits in general matroids in Section~\ref{sec:circuits}. 
   This later result gives a simpler proof of the corresponding result for graphs stated in~\cite{Montellano2006}.
   We also consider the corresponding problem of $p$-circuits in matroids, but this is a much difficult problem, 
   that is not surprising as the corresponding problem for the complete graph took 30 years to be solved. However, 
   we did make some advances in the case of projective planes.  In Section~\ref{steiner}, we manage to extend this results 
   to a no trivial class 
   of paving matroids that are obtained from Steiner systems. 

\section{The hypergraph of circuits of a matroid}\label{sec:circuits}

 We start our study of hypergraphs associated to matroids by finding the heterocromatic number of the hypergraphs of circuits of a matroids. We denote by $C(M)$ the hypergraph of circuits of a matroid $M$, that is, the vertices in the hypergraph are the elements of the matroid, and the hyperedges are the circuits.
 
 \begin{theorem}\label{Th:hete_circuit}
 For a loopless rank-$r$ matroid $M$ with $n$ elements that is not the free matroid $U_{n,n}$, we  have that $hc(C(M))=r+1$
 \end{theorem}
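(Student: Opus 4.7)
The plan is to match the two inequalities $hc(C(M)) \leq r+1$ and $hc(C(M)) \geq r+1$, each by a short structural argument: the former is essentially pigeonhole, the latter a construction.

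For the upper bound I would start from an arbitrary surjective colouring $\phi : E(M) \to [r+1]$ and select one representative from each colour class, producing an $(r+1)$-element transversal $S$ of distinct colours. Since $M$ has rank $r$, the set $S$ is dependent, so it contains a circuit; such a circuit is automatically rainbow because $S$ is. Hence every $(r+1)$-colouring is heterochromatic. The hypothesis $M \neq U_{n,n}$ enters here only to guarantee $n \geq r+1$, so that $(r+1)$-colourings of $E(M)$ exist at all.

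For the lower bound I would construct a surjective $r$-colouring of $E(M)$ with no rainbow circuit. The guiding idea is that a colouring whose colour classes are the consecutive layers of a chain of flats cannot produce a rainbow circuit, because the element lying in the highest layer hit by the circuit would be forced, via the circuit relation, into the closure of the lower layers. Concretely, I would fix a basis $B = \{b_1, \ldots, b_r\}$ and take the flag $F_i = \mathrm{cl}(\{b_1, \ldots, b_i\})$, colouring each layer $L_i = F_i \setminus F_{i-1}$ with colour $i$. Looplessness ensures every $L_i$ is nonempty (in particular $b_i \in L_i$), so all $r$ colours appear; and a putative rainbow circuit $C$ with top-layer witness $c \in L_j$ would satisfy $c \in \mathrm{cl}(C \setminus \{c\}) \subseteq \mathrm{cl}(F_{j-1}) = F_{j-1}$, contradicting $c \in L_j$.

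The main obstacle, such as it is, lies in choosing the right colouring for the lower bound. A naive attempt that colours $B$ with $r$ distinct colours and then extends by colouring each $e \notin B$ with the colour of some element of $C(e,B) \cap B$ only controls fundamental circuits and can leave a non-fundamental circuit rainbow (for instance, one of the triangles of $M(K_4)$ for some choices). The flag-of-flats colouring sidesteps this by turning the obstruction into a purely closure-theoretic fact that applies uniformly to every circuit of $M$.
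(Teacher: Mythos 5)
Your proof is correct, and while your upper bound is exactly the paper's argument, your lower bound takes a genuinely different route. For the upper bound both you and the paper pick a rainbow transversal of $r+1$ colour classes, note it is dependent in a rank-$r$ matroid, and extract a rainbow circuit, with $M\neq U_{n,n}$ used only to ensure $n\geq r+1$. For the lower bound the paper argues by induction on the rank: it colours a cocircuit $C^*$ monochromatically, uses the orthogonality fact $|C\cap C^*|\neq 1$ to kill every circuit meeting $C^*$, and recurses on the loopless rank-$(r-1)$ restriction $M\setminus C^*$ to the complementary hyperplane. You instead exhibit one explicit $r$-colouring by the layers $L_i=F_i\setminus F_{i-1}$ of a complete flag of flats $F_i=\mathrm{cl}(\{b_1,\ldots,b_i\})$, and verify it non-inductively via the closure fact $c\in \mathrm{cl}(C\setminus\{c\})$ for $c$ in a circuit $C$: the unique element of a putative rainbow circuit in its top layer $L_j$ would be forced into the flat $F_{j-1}$. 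These are really the same extremal colouring seen from opposite ends --- unrolling the paper's induction restricts successively to a chain of hyperplanes, and each deleted cocircuit is exactly one of your layers, since $L_i$ is the complement of the hyperplane $F_{i-1}$ in $M|F_i$ --- but your verification is shorter and makes the extremal structure explicit, whereas the paper's induction automatically produces non-heterochromatic colourings for \emph{every} number of colours $t\leq r$, not just $t=r$. Under the definition of $hc$ as the least $t$ for which every $t$-colouring is heterochromatic, you should therefore append the one-line remark that merging colour classes of your flag colouring yields non-heterochromatic $t$-colourings for all $t\leq r$, since identifying colours cannot turn a circuit with a repeated colour into a rainbow one; with that trivial addendum your argument fully establishes $hc(C(M))=r+1$.
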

 \begin{proof}
 Take any colouring of the elements of $M$ with $r+1$ colours. We use all the colours as $M$ is not $U_{n,n}$ and $n\geq r+1$. Take the an heterochromatic subset $A$ of size $r+1$. Then,  $A$ is dependent and contains a circuit.  So there is an heterochromatic  circuit $C$. 
 
 Now, if we have less than $r+1$ colours, take a cocircuit $C^*$ and colour all its elements with color $1$. Because for any circuit $C$, $|C\cap C^*|\neq 1$, any circuit $C$ with non-empty intersection with $C^*$ will have at least 2 elements of the same colour.  But $M'=M\setminus C^*$ is a loopless rank $r-1$ matroid (it  is actually a hyperplane). By induction we can colour $M'$ with less than $r$ colours such that no circuit is heterochromatic. At the base of the induction we have loopless rank-1 matroids, where the result is trivial. Then, the results follows. 
 \end{proof}
 
 Note that if $M$ has a loop, then $hc(C(M))$ equals 1, while if $M$ is isomorphic to $U_{n,n}$, the hypergraph $C(M)$ is does not have hyperedges.  For the hypergraph $C^*(M)$, whose points are the elements of the matroid $M$ and the hyperedges are the cocircuits of $M$, by using duality, we have the following 

\begin{corollary} 
For a coloopless rank-$r$ matroid $M$ with $n$ elements that is not the uniform matroid $U_{0,n}$, we  have that $hc(C^*(M))=r^*+1$
\end{corollary}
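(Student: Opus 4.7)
The plan is to derive this corollary directly from Theorem~\ref{Th:hete_circuit} by passing to the dual matroid $M^*$. Recall from matroid duality that the cocircuits of $M$ are precisely the circuits of $M^*$, so as hypergraphs on the common ground set we have the identification $C^*(M)=C(M^*)$. This immediately reduces the question of computing $hc(C^*(M))$ to computing $hc(C(M^*))$, for which Theorem~\ref{Th:hete_circuit} is available, provided the hypotheses on $M^*$ are verified.

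First I would translate each hypothesis on $M$ into a hypothesis on $M^*$. The rank of $M^*$ is the corank $r^*$ of $M$, so in the theorem's conclusion $r$ gets replaced by $r^*$. The assumption that $M$ has no coloops is equivalent to $M^*$ having no loops, since an element is a coloop of $M$ if and only if it is a loop of $M^*$; this is exactly the looplessness hypothesis needed in Theorem~\ref{Th:hete_circuit}. Finally, since $(U_{0,n})^*=U_{n,n}$, the assumption $M\not\cong U_{0,n}$ translates to $M^*\not\cong U_{n,n}$, which is the remaining hypothesis of the theorem.

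With these three translations in place, Theorem~\ref{Th:hete_circuit} applied to $M^*$ gives $hc(C(M^*))=r^*+1$, and combining with the identification $C^*(M)=C(M^*)$ yields $hc(C^*(M))=r^*+1$, as required.

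There is essentially no obstacle here: the whole argument is bookkeeping around duality, and the only point requiring any care is making sure that each of the three hypotheses in Theorem~\ref{Th:hete_circuit} corresponds correctly under dualization. Since $n$ is preserved, the loop/coloop correspondence is standard, and the dual of a uniform matroid is again uniform with the complementary parameters, the verification is routine.
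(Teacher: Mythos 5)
Your proposal is correct and is exactly the argument the paper intends: the corollary is stated immediately after the remark ``by using duality,'' and the implicit proof is precisely the identification $C^*(M)=C(M^*)$ together with the translations coloopless $\leftrightarrow$ loopless, $r^*=$ rank of $M^*$, and $M\not\cong U_{0,n}\leftrightarrow M^*\not\cong U_{n,n}$, followed by an application of Theorem~\ref{Th:hete_circuit}. Your write-up just makes the bookkeeping explicit where the paper leaves it to the reader.
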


 These results generalised the case for graphs. For a graphic matroid $M(G)$ of a connected graph $G$, let $C(G)=C(M(G))$ the hypergraph whose vertices are the edges of a graph $G$ and whose hyperedges are the cycles of $G$ and let $C^*(M(G))=C^*(G)$ the hypergraph of the edge-cuts of $G$.  Thus, for a connected graph $G$ with $n$ vertices and $m$ edges, $hc(C(G))=n $ and $hc(C^*(G))= m-n+2$. This last results was originally proved in~\cite{Montellano2006}.

 \section{The hypergraph of bases of a matroid}\label{paving}
 
     For the second theorem we let $B(M)$ be the hypergraph whose vertices are the elements of $M$ and its hyperedges the 
     bases of $M$. Given a family $\mathcal{B}$ of subsets of a set $E$, a subset $C$ is a  2-{\em transversal} 
     if the intersection of $C$ with any member of $\mathcal{B}$ has cardinality at least 2. For a rank $r$ matroid, 
     we called a rank $r-2$ flat a 
     \emph{coline}. Clearly, the complement of a coline 
     is a 2-transversal of the clutter of bases of $M$, thus, if $t_2$ is the size of the smallest complement of a coline and   
     $M$ has $n$ elements, $hc(B(M))$ is at least $n-t_2+2$. This lower bound is actually the right value. 
 
 \begin{theorem}\label{Th:hc_bases}
 For a rank-$r$ matroid $M$ with $n$ elements, we  have that $hc(B(M))=n-t_2+2$
 \end{theorem}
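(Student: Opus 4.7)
The plan is to recast the problem as an instance of Edmonds' matroid intersection theorem. Let $P$ be the partition matroid on $E$ whose parts are the colour classes; a rainbow basis of $M$ is exactly a common basis of $M$ and $P$. By matroid intersection, such a common basis exists if and only if $\mathrm{rk}_M(A)+c(E\setminus A)\geq r$ for every $A\subseteq E$, where $c(S)$ denotes the number of colours appearing in $S$. I would prove the lower bound on $hc(B(M))$ directly and use this min-max characterisation for the upper bound.

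For the lower bound $hc(B(M))\geq n-t_2+2$, I would exhibit a colouring with $n-t_2+1$ colours that avoids any rainbow basis: pick a coline $F^{*}$ with $|E\setminus F^{*}|=t_2$, assign each element of $F^{*}$ its own colour, and paint every element of $E\setminus F^{*}$ with one additional common colour. Since $E\setminus F^{*}$ is a $2$-transversal of the bases, every basis contains two elements of that common colour.

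For the upper bound I would argue by contradiction, assuming a colouring with $k=n-t_2+2$ colours has no rainbow basis. Matroid intersection produces $A\subseteq E$ with $\mathrm{rk}_M(A)+c(E\setminus A)\leq r-1$, and I would replace $A$ by its closure so that $A$ is a flat of some rank $s$. The inequalities $|A|\geq c(A)\geq k-c(E\setminus A)$ combined with $c(E\setminus A)\leq r-1-s$ yield the key size bound $|E\setminus A|\leq t_2+r-s-3$. The values $s=r-1$ and $s=r$ are ruled out quickly (they force $E\setminus A=\emptyset$ while $\mathrm{rk}(A)<r$), so $s\leq r-2$.

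The remaining two cases both collide with the minimality of $t_2$. If $s=r-2$, then $A$ is itself a coline and the bound reads $|E\setminus A|\leq t_2-1$, contradicting the definition of $t_2$. If $s\leq r-3$, I would extend $A$ to an arbitrary coline $F$ of $M$; because $F$ is a flat of rank $r-2$ properly containing the rank-$s$ flat $A$, one necessarily has $|F\setminus A|\geq r-2-s$ (the rank jump from $s$ to $r-2$ requires that many fresh independent elements), and hence $|E\setminus F|\leq(t_2+r-s-3)-(r-2-s)=t_2-1$, again contradicting the minimality of $t_2$. The only delicate point is verifying the size bound $|E\setminus A|\leq t_2+r-s-3$; it is precisely tight enough that the rank deficit $r-2-s$ absorbed when extending $A$ to a coline leaves exactly the $t_2-1$ we need.
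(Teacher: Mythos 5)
Your proposal is correct, but it takes a genuinely different route from the paper. The paper's upper bound is a short direct exchange argument: take a rainbow set $A$ of $n-t_2+2$ points, one per colour class; since $|A|$ exceeds the cardinality of every coline, $A$ has rank at least $r-1$; if the rank is exactly $r-1$, pick a point $a$ outside the hyperplane spanned by $A$, delete from $A$ the point carrying $a$'s colour, note that the remaining $n-t_2+1$ points still outnumber every coline and hence still have rank $r-1$, and conclude that adding $a$ gives a rainbow spanning set containing a rainbow basis. You instead invoke Edmonds' matroid intersection theorem (equivalently, Rado's theorem on rainbow bases), extract a deficient set, pass to its closure, and show that any deficient flat of rank $s$ sits inside a coline whose complement would have size at most $t_2-1$; I checked your arithmetic and it is tight but sound in every case: $|A|\geq k-c(E\setminus A)\geq k-(r-1-s)$ gives $|E\setminus A|\leq t_2+r-s-3$, the rank jump $r-2-s$ absorbs exactly the slack when you extend $A$ to a coline $F$ (such an extension exists since the lattice of flats is graded), and the cases $s\in\{r-1,r\}$ collapse immediately since $c(E\setminus A)\leq r-1-s$ forces $E\setminus A=\emptyset$ against $\mathrm{rk}(A)+c(E\setminus A)\leq r-1$. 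One terminological slip: a rainbow basis is a common \emph{independent set of size $r$} of $M$ and the partition matroid $P$, not a common basis, since $P$ generally has rank $n-t_2+2>r$; the min--max condition you actually use is the correct one, so nothing downstream is affected. As for what each approach buys: the paper's argument is elementary and only a few lines, while yours requires heavier machinery but is more structural --- the min--max locates the obstruction at a flat and makes transparent why colines are extremal (they maximise $|F|+r-1-\mathrm{rk}(F)$ over flats $F$), and the same computation would yield, with no extra work, the analogous threshold for rainbow independent sets of any prescribed size rather than just bases.
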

 \begin{proof}
  Let $t$ be the size of $L$, the coline in $M$ with maximum cardinality,  then $t$ is the size of the smallest complement of a
   coline. As $E(M)\setminus L$ is a 2-transversal, it is enough to prove that $hc(B(M))\leq t+2$. To this end, we give a 
   $t+2$ colouring of $B(M)$ and prove that there is an heterochromatic basis. 
  
  Let $A$ be an heterochromatic  set of $t+2$ points. As $L$ is a coline, $t+2$ is greater or equal $r$, the rank of the matroid. If the rank of $A$ is $r$, we are done. If the rank is $r-1$, let take $H$ a hyperplane that contains $A$ and let $a$ be a  point that is not in $H$. Let assume that the colour of $a$ is 1 and consider $B$ the set of points in $A$ of colour different from 1, that is $B=A\setminus\{a\}$. The set $B$ has cardinality $t+1$ that is bigger than the size of any rank $r-2$ flat, thus $B$ has rank $r-1$. Thus, $B\cup \{a\}$ has rank $r$ and its heterochromatic. Finally, $A$ cannot have rank less than $r-1$ as its bigger than any rank $r-2$ flat. 
 \end{proof}
 
    By Corollary~9 of~\cite{MoRi2013}, the largest complement of a coline is a minimal 2-transversal. 
    % However, the result is rather theoretical as, in general finding the size of the largest rank $r-2$ flat is very difficult. 
    For some families of matroids 
    this is easy to compute. For the graphic matroid of the complete graph $K_n$, the rank is $n-1$ and the largest rank 
    $n-3$ flats are complete subgraphs $K_{n-2}$. We can, then, compute the heterochromatic number as 
    ${n-2 \choose 2}+2={n \choose 2}-2n+5={n \choose 2}-(2n-3)+2 $ that is a results given in \cite{JiWe2004}. 
    For the matroids coming from the projective spaces PG($r-1,q$) the heterochromatic number of bases is 
    $|PG($r-3,q$)|$+2. In particular, for projective planes, this value is 3. And for the $n$ wheels, $W_n$ the value 
    is $(2n-5)-2=2n-3$ 
 
 \subsection{Paving matroids}

 Paving matroid are matroids are matroids such that the circuits are at least as big as the rank of the matroid. And sparse paving matroids are paving matroids such that any two circuits of size $r$  $C$ and $D$ are far apart, that is $|C\bigtriangleup D|>2$. Paving and sparse paving are  a potentially large clase of matroids. Example of paving matroids are uniform matroids, projective planes and matroids coming from Steiner systems $S(r-1,d,n)$.  
 
 \begin{theorem}
 if $M$ is a rank-$r$ paving matroid, then $hc(B(M))=r$.
 \end{theorem}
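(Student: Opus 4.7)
The plan is to first dispose of the lower bound: since every basis has exactly $r$ elements, $B(M)$ is an $r$-uniform hypergraph, so with strictly fewer than $r$ colours no hyperedge can be totally multicoloured, giving $hc(B(M))\geq r$ automatically. Everything interesting lies in showing $hc(B(M))\leq r$, i.e.\ that every surjective $r$-colouring of $E(M)$ produces a rainbow basis.

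Given such a colouring $\phi\colon E(M)\to [r]$, the natural candidate is any rainbow set obtained by picking one element $a_i$ of each colour $i$; set $A=\{a_1,\ldots,a_r\}$. If $A$ is independent we are done, so suppose not. This is precisely where the paving hypothesis should enter: every circuit has size at least $r$, so the only way an $r$-set can fail to be independent is by being itself a circuit. Hence $A$ is a circuit-hyperplane of rank $r-1$, and $H:=\overline{A}$ is a hyperplane.

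The second step is to repair $A$ by a single exchange. Pick any $b\in E(M)\setminus H$, which exists because $M$ has rank $r>r-1$, and let $j=\phi(b)$. I expect $(A\setminus\{a_j\})\cup\{b\}$ to be a rainbow basis: it is rainbow because we replaced the unique element of colour $j$ by another element of the same colour $j$; and it is a basis because, $A$ being a circuit, the subset $A\setminus\{a_j\}$ is independent of size $r-1$ and spans $H$, so adjoining $b\notin H$ produces an independent set of size $r$.

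I do not anticipate a serious obstacle; the paving property does the decisive work by forcing bad rainbow $r$-sets to be circuit-hyperplanes, after which a one-element swap through the hyperplane $H$ completes the argument. The only points to check carefully are that $E(M)\setminus H$ is non-empty (guaranteed by the rank hypothesis) and that removing $a_j$ rather than some other $a_i$ preserves rainbowness, which is immediate since $j$ is by construction the colour of the element being removed.
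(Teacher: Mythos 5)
Your proposal is correct and follows essentially the same route as the paper's own proof: a rainbow $r$-set is either a basis or (by the paving hypothesis) a circuit, and in the latter case a single exchange through an element outside its closure yields a rainbow basis. One small terminological nit: $A$ need not be a \emph{circuit-hyperplane} (it is a circuit whose closure is a hyperplane, but $A$ itself may not be closed), though this is harmless since you only ever use $H=\overline{A}$.
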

 \begin{proof}
   If $M$ is a uniform matroid the result is trivial (any set of $r$ elements is a base) so we assume that $M$ is a non-uniform paving matroid. Take any colouring of the elements of $M$ with $r$ colours. Take a totally multicoloured set of $r$ elements, one per colour class, and named this set $A$. As $A$ has $r$ elements, is either a basis or a circuit (as $M$ is paving). In the former case we are done. If $A$ is a circuit, take an element $a$ not in the closure of $A$, this is possible as $A$ has rank $r-1$. Suppose that $a$ has colour 1, then we can use the elements in $A$ with colour different to 1 to form a basis. 
   
   Clearly, a colouring with $r-1$ colours of the elements of $M$ will forbid any basis to be heterochromatic.
 \end{proof}
 
  As a corollary, we get that the heterochromatic number for the hypergraph of the bases of a projective planes is $3$. Also, the result implies that the hypergraphs $B(M)$ are tight. A whole classification of tight hypergraphs is a tantalizing problem, but in the case of hypergraphs $B(M)$, the next result completely solves the problem. 
 
  \begin{theorem}
 if $M$ is a rank-$r$  matroid such that $hc(B(M))=r$, then $M$ is paving.
 \end{theorem}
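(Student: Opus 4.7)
The plan is to prove the contrapositive: if $M$ fails to be paving, then $hc(B(M))>r$. Since every basis has exactly $r$ elements, a rainbow basis in an onto $r$-colouring of $E(M)$ uses each of the $r$ available colours precisely once; so showing $hc(B(M))>r$ is equivalent to producing one onto $r$-colouring of $E(M)$ with no rainbow basis.

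Assuming $M$ is not paving, I would fix a circuit $C=\{c_1,\ldots,c_k\}$ with $k\le r-1$. The construction I propose is to spend the first $k$ colours on $C$ itself: set $\phi(c_i)=i$ for $i=1,\ldots,k$, and then colour the elements of $E\setminus C$ using the remaining colours $k+1,\ldots,r$, making sure each of these colours is used at least once. This is possible because a matroid containing a circuit must have strictly more than $r$ elements (otherwise the whole ground set would be a basis, hence independent), so $|E\setminus C|\ge r-k+1$, which leaves enough room to realise the $r-k$ remaining colours.

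The verification is then immediate. Each of the colours $1,\ldots,k$ is carried by a single element, namely $c_i$, so any rainbow basis $B$ would be forced to contain every $c_i$, that is, $C\subseteq B$. Since $C$ is a circuit and therefore dependent, this contradicts the independence of $B$. Hence no basis is rainbow, giving $hc(B(M))\ge r+1>r$, which is exactly what the contrapositive requires.

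I do not anticipate a serious obstacle: the whole argument turns on the single observation that assigning each element of a short circuit its own private colour makes the entire circuit the unique candidate for supplying those ``low'' colours to any rainbow basis. The only bookkeeping is to ensure enough elements remain outside $C$ to realise the remaining $r-k$ colours, which the inequality $|E|\ge r+1$ settles at once. The extreme cases ($k=1$ giving a loop, $k=2$ giving parallel elements) fit the same template without any special treatment.
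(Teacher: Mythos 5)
Your proposal is correct and uses essentially the same construction as the paper: take a circuit $C$ of size $k<r$, give its elements the $k$ colours $1,\ldots,k$ (used nowhere else), and distribute the remaining $r-k$ colours on $E\setminus C$. Your verification (a rainbow basis would need each of the colours $1,\ldots,k$ and hence contain all of the dependent set $C$) is just a rephrasing of the paper's count that any totally multicoloured independent set has at most $(k-1)+(r-k)=r-1$ elements, and your check that $|E|\ge r+1$ guarantees the colouring is onto is a detail the paper leaves implicit.
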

 \begin{proof}
    Let $M$ be a non-paving matroid. We are going to show that there is a $r$-colouring of $B(M)$ that is not 
    heterochromatic, that is, that no basis is totally multicoloured.  As $M$ is not paving, it has a circuit of sizes $k<r$, say 
    $C$. We totally multicolour the elements of $C$ with, necessarily, $k$ different colours. We colour the remaining elements
     in $M$ with the unused $r-k$ colours. 
    Then, any $r$-subset disjoint from $C$ is not totally multicoloured. Also, any independent subset  cannot use more than 
    $k-1$ elements from $C$. Thus, any totally multicoloured independent  subset will have at most $r-k+(k-1)=r-1$ 
    elements. So $hc(B(M)>r$.
  \end{proof}

    The last two theorems say that $B(M)$ is tense iff $M$ is paving. An alternative approach to prove the last two results is 
    to use  the general Theorem~\ref{Th:hc_bases}, as all the rank-($r-2$) flats in a paving matroid have cardinality $r-2$. 
    Also if a  matroid is not paving, there is at least one rank-($r-2$) flat of size bigger than $r-2$. 

\section{Small and large circuits}\label{steiner}
 
   Projective planes are an interesting class of paving matroids that play a similar role  to complete graphs for rank-2
    representable matroids.  By the results on the previous section we get that the heterochromatic number of the 
    hypergraph of the bases of a projective plane $PG(2,q)$ of order $q$ is 3. Now, being a paving matroid, $PG(2,q)$ 
    has circuits of size 3 and 4, the former being 3 points in a line and the latter being 4 points in general position. Circuits of the 
    later form are called hamiltonian. For a rank-$r$ matroid $M$, we call a circuit of size $r+1$ a \emph{Hamiltonian circuit} and 
    $M$ is  called a Hamiltonian matroid. These matroids  have been considered in~\cite{Borowiecki1983}. 
    
    Let us start with small circuits and define $C_3(PG(2,q))$ to be the hypergraphs of 3-circuits. 
  
  \begin{theorem}\label{th:proj_3_circuit_rank_2}
  For a projective plane of order $q$, $PG(2,q)$, we have that $hc(C_3(PG(2,q)))=4$. 
 \end{theorem}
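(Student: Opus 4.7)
The plan is to prove the two inequalities $hc(C_3(PG(2,q))) \geq 4$ and $hc(C_3(PG(2,q))) \leq 4$ separately, with the second being the substantive direction.

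For the lower bound, I would exhibit an explicit $3$-colouring of the points of $PG(2,q)$ that uses all three colours yet contains no heterochromatic $3$-circuit. Fix a line $\ell$ and a point $a \in \ell$; assign colour $1$ to $a$, colour $2$ to the remaining $q$ points of $\ell$, and colour $3$ to the $q^{2}$ points off $\ell$. All three colours appear, since $q\geq 2$ guarantees at least two other points on $\ell$ and $q^{2}\geq 4$ points outside it. Any totally multicoloured collinear triple would have to consist of the unique colour-$1$ point $a$, a colour-$2$ point $b\in \ell\setminus\{a\}$, and a colour-$3$ point $c\notin \ell$; but the line through $a$ and $b$ is $\ell$, which does not contain $c$, so these three points are not collinear.

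For the upper bound I would argue by contradiction: suppose $\phi$ is a $4$-colouring of the points that uses all four colours and contains no rainbow $3$-circuit. Then every line of $PG(2,q)$ meets at most two colour classes, since three collinear points of pairwise distinct colours would give a forbidden rainbow circuit. Pick representatives $p_{i}$ of the colour class $V_{i}$ for $i=1,2,3,4$. The line $\ell_{12}$ through $p_{1}$ and $p_{2}$ already hits $V_{1}$ and $V_{2}$, so by the two-colour restriction it uses exactly colours $1,2$; likewise the line $\ell_{34}$ through $p_{3}$ and $p_{4}$ uses exactly colours $3,4$. These two lines are distinct, hence in the projective plane they intersect in a single point $x$. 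Now membership in $\ell_{12}$ forces $\phi(x)\in\{1,2\}$, whereas membership in $\ell_{34}$ forces $\phi(x)\in\{3,4\}$, a contradiction.

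No step here is really a serious obstacle: the whole upper-bound argument rests on the defining property of a projective plane that any two distinct lines meet, which delivers the contradiction as soon as the four colour representatives are fixed. This is precisely the projective analogue of the elegant argument of Erd\"os, Simonovits and S\'os for triangles in $K_{n}$ mentioned in the introduction, and it is what one loses on passing to affine planes or higher-rank projective geometries, where two flats of complementary colour pairs need not meet.
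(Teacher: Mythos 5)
Your proof is correct, and both directions differ in organization from the paper's, even though the decisive geometric trick is the same. For the lower bound, the paper simply invokes Theorem~\ref{Th:hete_circuit} (the matroid has rank $3$, so some $3$-colouring has no heterochromatic circuit at all); your explicit colouring --- complement of a line monochromatic, then one point versus the rest of the line --- is in effect the colouring that the cocircuit induction in that theorem's proof produces, so here you are unpacking rather than diverging. The real difference is in the upper bound: the paper again routes through Theorem~\ref{Th:hete_circuit} to extract a totally multicoloured circuit from any $4$-colouring, splits into the cases $|C|=3$ (done) and $|C|=4$ (four points $a,b,c,d$ in general position), and only then intersects the lines $ab$ and $cd$ to constructively produce a rainbow collinear triple. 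You bypass the rainbow-circuit theorem and the case analysis entirely: from the assumption of no rainbow $3$-circuit you deduce that every line meets at most two colour classes, take representatives of the four classes, and intersect the two bichromatic lines $\ell_{12}$ and $\ell_{34}$ with disjoint colour pairs, getting a contradiction at the intersection point. Both arguments rest on the same projective-plane axiom that two distinct lines meet, but yours is self-contained, runs by contradiction rather than construction, and is arguably the cleaner analogue of the Erd\H{o}s--Simonovits--S\'os argument for $ar(n,C_3)$. What the paper's heavier machinery buys is scalability: the reduction via Theorem~\ref{Th:hete_circuit} and a minimum rainbow circuit is exactly what powers the induction in Theorem~\ref{th:proj_3_circuit_rank_r} for $PG(r-1,q)$, where your global claim that every line is bichromatic is no longer available with $r+1$ colours --- a limitation you correctly identify yourself in your closing remark.
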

 \begin{proof}
    Because the rank of the matroid associated to $PG(2,q)$ has rank 3, we know by Theorem~\ref{Th:hete_circuit} that 
    with 3 colours there is a 3-colouring that has no heterocromatic circuit, so $hc(C_3(PG(2,q)))\geq4$. 
    Let us take a 4-colouring of the points in $PG(2,q)$. 
    Again, by Theorem~\ref{Th:hete_circuit},  we know any 4-colouring  is  heterocromatic, that is, there is a totally coloured
     circuit. If this circuit  is  3 points in a line we are done. If the circuit are 4 points in general position, let us label the points
      $a$, $b$, $c$ and $d$. Now, take the lines $ab$ and $cd$, as we are in a projective plane, there is a point $e$ in the 
      intersection of the two lines. This point is neither of the previous 4 points. Whichever the colour of $e$ is, exactly one 
      of the pairs $\{a,b\}$ or $\{c,d\}$ don't contain that colour. Thus, we get 3 points in a line all with different colour. 
  \end{proof}
  
   In a similar fashion, the projective space $PG(r-1,q)$ play the same role as complete graphs for representable matroids over
   the field $GF(q)$. Let  $C_3(PG(r-1,q))$ be the hypergraphs of 3-circuits of the matroid $PG(r-1,q)$. 
   
    \begin{theorem}\label{th:proj_3_circuit_rank_r}
  For the rank-$r$ projective geometry over $GF(q)$, $PG(r-1,q)$, we have that $hc(C_3(PG(r-1,q)))=r+1$. 
 \end{theorem}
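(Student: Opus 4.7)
The plan is to proceed by induction on $r$, with Theorem~\ref{th:proj_3_circuit_rank_2} giving the base case $r=3$. The lower bound $hc(C_3(PG(r-1,q)))\geq r+1$ is immediate from Theorem~\ref{Th:hete_circuit}: the $r$-colouring constructed there has no heterochromatic circuit of any size, hence no heterochromatic $3$-circuit in particular. So all the real work is in showing that every $(r+1)$-colouring of $PG(r-1,q)$ produces three collinear points whose colours are pairwise distinct.

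For the inductive step I would fix $r\geq 4$ and an $(r+1)$-colouring $\phi$ of $PG(r-1,q)$, then pick any hyperplane $H$, which is a copy of $PG(r-2,q)$. The whole argument hinges on $k$, the number of colours that $\phi$ uses on $H$. If $k\geq r$, pairwise identifying the $k-r$ excess colour classes gives an onto $r$-colouring $\phi'$ of $H$; the inductive hypothesis $hc(C_3(H))=r$ then supplies a $\phi'$-heterochromatic $3$-circuit $T\subseteq H$, and since a triple of pairwise distinct merged colours must already be pairwise distinct before merging, $T$ is already $\phi$-heterochromatic. If instead $k\leq r-1$, at least two colours of $\phi$, say $c_1\neq c_2$, are absent from $H$. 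Choosing $p_i$ with $\phi(p_i)=c_i$ forces $p_1,p_2\notin H$, so the line they span is not contained in $H$ and meets it in a single point $p_0$; the colour $\phi(p_0)$ appears on $H$ and therefore differs from both $c_1$ and $c_2$, giving the desired heterochromatic $3$-circuit $\{p_0,p_1,p_2\}$.

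The step I expect to require the most care is the ``many colours on $H$'' case: one needs to check that after merging we still have an onto $r$-colouring of $H$ and that heterochromaticity lifts back from $\phi'$ to $\phi$. Both points follow from the trivial observation that identifying colour classes can only destroy, never create, heterochromaticity of a fixed triple, but it is worth spelling out. Beyond that, the proof is essentially structural and leans only on the modular fact that in $PG(r-1,q)$ a line either lies in a given hyperplane or meets it in exactly one point; I do not anticipate further subtleties.
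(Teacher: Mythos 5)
Your proof is correct, but it follows a genuinely different route from the paper's. The paper takes a totally multicoloured $(r+1)$-set, extracts a totally multicoloured circuit $C$ of \emph{minimum} size, and then shrinks it: for $3<|C|<r+1$ it passes to the flat spanned by $C$ (a smaller projective geometry) and inducts, and for $|C|=r+1$ it intersects the line spanned by $e_1,e_2$ with the hyperplane spanned by $e_3,\ldots,e_{r+1}$ to manufacture a strictly smaller totally multicoloured circuit, contradicting minimality. You instead induct directly on the rank via a colour-counting dichotomy on an arbitrary hyperplane $H\cong PG(r-2,q)$: if $H$ carries at least $r$ colours you merge excess classes and invoke the inductive hypothesis inside $H$ (correctly noting that merging can only destroy, never create, heterochromatic triples, so the triple lifts to $\phi$); if $H$ carries at most $r-1$ colours, two colours live entirely outside $H$, and the line through two such points meets $H$ in a single point of a necessarily third colour. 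Both arguments rest on the same modular fact that a line not contained in a hyperplane meets it in exactly one point, but yours avoids minimal circuits and intermediate circuit sizes entirely, and is arguably the more faithful matroid translation of the classical vertex-deletion proof of $ar(n,C_3)=n-1$ from Erd\"os--Simonovits--S\'os; the paper's version, by contrast, yields as a by-product the circuit-shrinking phenomenon that it later isolates as a lemma for binary matroids. One point worth spelling out in your Case 1, beyond what you already flag: the inductive hypothesis is invoked for an \emph{onto} $r$-colouring, and the justification that a non-heterochromatic $r$-colouring forces $hc\geq r+1$ (your lower bound, same as the paper's) implicitly uses the upward-closedness of the set of $t$ for which all $t$-colourings are heterochromatic --- which follows from the very same merging observation, so your argument is self-contained on this score.
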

 \begin{proof}
   By Theorem~\ref{Th:hete_circuit}, we have the lower bound $hc(C_3(PG(r-1,q)))\geq r+1$. We will show that any $r+1$-colouring of the hypergraph $C_3(PG(r-1,q))$ has an heterochromatic edge using the same argument as in~\cite{ErSiSo1973}.
   
   The case when the rank equals 2 is Theorem~\ref{th:proj_3_circuit_rank_2}.  Let us suppose that $r>2$. Take a totally multicoloured set $A$ with $r+1$ colours in $PG(r-1,q)$. The set $A$ is necessarily dependent, so there is a circuit that is totally multicolour. Let $C$ be a totally multicoloured circuit of minimum size. If this circuit has size 3, we are done. Suppose that $r+1>|C|=k>3$, and consider the flat $F$ span by $C$. As we are in a projective geometry, $F$ is a projective geometry of rank $k-1$ over $GF(q)$ which elements has been $k$ colour. Then the result follows by induction. 
   
    We can suppose that $|C|=r+1$ and then $C$ is a Hamiltonian circuit. Let the elements of $C$ be $e_1$,$\ldots$, $e_{r+1}$.
    Take the line $L$ be the flat span by $e_1$ and $e_2$, and $H$ be the hyperplane span by the remaining $r-1$ elements. In a projective space, if a line $L'$ is not contained in a hyperplane $H'$, $L'\cap H'$ is exactly one point. Let $a$ be the point at the intersection of $L$ and $H$. This point cannot be a point already in $C$ as circuits are minimal dependent sets. Thus $\{a,e_1,e_2\}$ is a 3-circuit $C_2$ and 
    $\{a,e_3,\ldots,e_{r+1}\}$ contains a circuit $C_3$. One of the circuits is totally multicoloured and we have a contradiction with the election of $C$.    
 \end{proof}
 
  Probably the reader would disagree that the previous proof is the same as in~\cite{ErSiSo1973}, but this impression
   is due to the strong intuition given by the vertex structure in a graph. In order to prove our point, we invite the reader to rewrite 
    the final part of the above argument in the case of the field $GF(2)$ using the following lemma. 
   
   \begin{lemma}
    Let $M$ be a rank-$r$ simple Hamiltonian binary matroid that is not $U_{r-1,r}$. Then, any $r+1$ colouring of the elements of the matroid such that a Hamiltonian circuit is totally multicoloured contains a strictly smaller totally multicolored circuit. 
    \end{lemma}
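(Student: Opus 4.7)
The plan is to mimic the final case of the proof of Theorem~\ref{th:proj_3_circuit_rank_r}, replacing the projective-geometry ``a line not contained in a hyperplane meets it in exactly one point'' by the corresponding feature of binary matroids. Let $C = \{e_1,\ldots,e_{r+1}\}$ denote the totally multicoloured Hamiltonian circuit, and fix $B = \{e_1,\ldots,e_r\}$ as a basis of $M$. Since $M$ is simple, of rank $r$, and not the excluded small uniform matroid, $E(M)$ properly contains $C$, so one may pick $x \in E(M) \setminus C$.

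First I would look at the fundamental circuit $C(x,B) = \{x\} \cup \{e_i : i \in S\}$ of $x$ with respect to $B$, where $S \subseteq [r]$. Simplicity forces $2 \leq |S| \leq r-1$: values $|S| \in \{0,1\}$ would make $x$ a loop or parallel to some $e_i$, while $|S| = r$ would make $x$ parallel to $e_{r+1}$ via the Hamiltonian relation $e_{r+1} = \sum_{i=1}^{r} e_i$. Next, I would invoke the key binary-matroid fact that the symmetric difference of two circuits is a disjoint union of circuits, and apply it to $C$ and $C(x,B)$ to obtain
\[
C' \;:=\; C \mathbin{\triangle} C(x,B) \;=\; \{x\} \cup \{e_i : i \in [r+1]\setminus S\}.
\]
The element $x$ belongs to exactly one component of this decomposition, while any other component would be a circuit contained in the proper subset $\{e_i : i \in [r+1]\setminus S\}$ of $C$, and therefore independent. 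It follows that $C'$ is itself a single circuit.

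I expect this identification of $C'$ as a single circuit to be the main obstacle: it is the only place where the binary hypothesis really enters, and without it one cannot conclude that the candidate smaller set is a circuit rather than merely a dependent set or a union of circuits that might split badly with respect to the colouring.

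To finish, both $C(x,B)$ and $C'$ have size at most $r$, hence strictly smaller than $|C| = r+1$. Relabelling the colours so that $c(e_i) = i$, the colour $c(x)$ is some $t \in [r+1]$. If $t \in [r+1]\setminus S$, the colours appearing on $C(x,B)$ are the $|S|+1$ distinct values $\{t\} \cup S$, so $C(x,B)$ is a smaller totally multicoloured circuit; symmetrically, if $t \in S$ then $C'$ works. In either case a strictly smaller totally multicoloured circuit is produced, proving the lemma.
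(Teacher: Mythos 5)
Your proposal is correct and follows essentially the same route as the paper: pick an element $x$ outside the Hamiltonian circuit $C$, find a smaller circuit through $x$ inside $C \cup \{x\}$, and use the binary symmetric difference with $C$ to produce a complementary circuit, observing that the colour of $x$ can clash in only one of the two, so the other is totally multicoloured. Your version is a slightly more explicit rendering — you take the fundamental circuit $C(x,B)$ and prove $C \mathbin{\triangle} C(x,B)$ is a single circuit, where the paper takes an arbitrary circuit through $a$ in $C+a$ and only needs the component of the symmetric difference containing $a$ — but the mechanism and the role of the binary hypothesis are identical.
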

     \begin{proof}
   Let $C$ be the totally multicoloured Hamiltonian circuit and let $a$ be an element of $M$ not in $C$. 
   The subset $C+a$ contains  a circuit $a\in C'\neq C$ and $|C'|<|C|$ because $M$ is simple. If $C'$ is totally 
    multicoloured, we are done. Else, $C'$ has an element $a'\neq a$ with
    the same colour as $a$.  Take now $C'\triangle C$, that is a disjoint union of circuits because $M$ is binary. Let $D$ 
    be the circuit in this union that contains $a$ but this time, $a'$ is not in $D$ and $D$ is totally multicoloured and strictly 
    smaller than $C$. 
 \end{proof}

    Hamiltonian cycles play an important role in the main theorem in~\cite{MoNe2005}.  Let $HC(M)$ 
    be the hypergraph of Hamiltonian circuits of the matroid  $M$ when the matroid is Hamiltonian.
   We have the following general upper bound for the heterocromatic number of  $HC(M)$.

  \begin{lemma}\label{lemma:hamil_circuits}
    Let $M$ be a rank-$r$  matroid with at least one Hamiltonian circuit and let $q$ be the size of the largest hyperplane 
    in $M$, then we have that $hc(HC(M))> q+1$
   \end{lemma}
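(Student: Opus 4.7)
The plan is to exhibit an explicit $(q+1)$-colouring of the elements of $M$ that uses all $q+1$ colours and in which no Hamiltonian circuit is totally multicoloured. This directly gives $hc(HC(M)) > q+1$.

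Let $H$ be a hyperplane of $M$ of maximum size, so $|H|=q$. I would colour the $q$ elements of $H$ with $q$ distinct colours, one colour per element, using the palette $\{1,2,\ldots,q\}$, and then paint every element of $E(M)\setminus H$ with a single new colour $q+1$. Since a hyperplane of a rank-$r$ matroid has rank $r-1<r$, the set $E(M)\setminus H$ is nonempty, so all $q+1$ colours are used and this is a legitimate $(q+1)$-colouring.

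Now I would argue that no Hamiltonian circuit of $M$ is heterochromatic under this colouring. Let $C$ be any Hamiltonian circuit; then $|C|=r+1$ and $C$ spans $M$, so $C$ is not contained in $H$. In particular $C\cap (E(M)\setminus H)\neq\emptyset$. The complement of a hyperplane is a cocircuit, so $C^*=E(M)\setminus H$ is a cocircuit of $M$; and for any circuit $C$ and cocircuit $C^*$, the intersection $|C\cap C^*|$ cannot equal $1$. Combined with $|C\cap C^*|\geq 1$, this forces $|C\cap (E(M)\setminus H)|\geq 2$. Thus $C$ contains at least two elements of the same colour $q+1$, so $C$ is not totally multicoloured.

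There is no real obstacle here: the construction is just the hyperplane/cocircuit trick already used in the proof of Theorem~\ref{Th:hete_circuit}, applied now to the restricted hypergraph of Hamiltonian circuits. The only subtlety worth mentioning is to confirm that the colouring is onto, which follows from $H$ being a proper subset of $E(M)$, and that the Hamiltonian hypothesis is exactly what guarantees $C\not\subseteq H$ so that the cocircuit axiom can be invoked.
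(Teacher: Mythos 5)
Your proposal is correct and is essentially the paper's own argument: the paper also takes a maximum-size hyperplane $H$ and shows its complement is a $2$-transversal of the Hamiltonian circuits (via $|C\cap H|\leq r-1$, since $C\cap H$ is an independent set in a rank-$(r-1)$ flat), which yields exactly your colouring. Your only deviation is justifying the $2$-transversal property through circuit--cocircuit orthogonality rather than the rank bound, and making the colouring explicit --- both steps are sound.
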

   \begin{proof}
     Let $H$ be a hyperplane of largest size and $C$ any circuit of size $r+1$.  Then $|C\cap H|\leq r-1$, and so, 
     the complement of $H$ in $E(M)$ is a 2-transversal. Thus, the result follows.  
  \end{proof}
  
   Projective planes are Hamiltonian matroids, thus, by Lemma~\ref{lemma:hamil_circuits}, the heterochromatic number of a projective plane of order $q$ is greater than $q+2$. The following theorem proves that the correct value is in fact $q+3$

\begin{theorem}
  For a projective plane of order $q>1$, $PG(2,q)$, we have that $hc(HC(PG(2,q)))=q+3$. 
 \end{theorem}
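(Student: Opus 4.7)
The plan is to first establish the lower bound $hc(HC(PG(2,q))) \geq q+3$ via Lemma~\ref{lemma:hamil_circuits}, which applies because the largest hyperplane of $PG(2,q)$ is a line of $q+1$ points. For the matching upper bound I would fix an arbitrary coloring $\phi$ of the $q^2+q+1$ points using exactly $q+3$ colors and aim to produce four points in general position (no three collinear) of four distinct colors. First I would locate a \emph{heterochromatic triangle}---three non-collinear points of three distinct colors. If none existed, then for any two points $p_1,p_2$ of colors $1$ and $2$ respectively, every point of every remaining color would be forced onto the line $p_1p_2$, whose $q-1$ remaining points cannot host representatives of $q+1$ distinct colors.

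Let $T=\{p_1,p_2,p_3\}$ be such a heterochromatic triangle, with colors $1,2,3$ and sides $L_{12},L_{13},L_{23}$ jointly covering $3q$ points and leaving $(q-1)^2$ off-side points. If any off-side point carries a color $c \geq 4$, then $T$ together with that point is a heterochromatic Hamiltonian circuit, and I am done. Otherwise every off-side point is colored $1$, $2$, or $3$; equivalently, every class $A_c$ with $c \geq 4$ lies in $L_{12}\cup L_{13}\cup L_{23}$. The same dichotomy must hold for every other heterochromatic triangle I can exhibit, or else I finish immediately, so the plan is to build two more heterochromatic triangles sharing the side $L_{12}$ with $T$ and to intersect the resulting constraints.

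For $q \geq 3$, at least one triangle color, say $3$, has two off-side representatives $p'_3,p''_3$ by pigeonhole applied to $(q-1)^2 \geq 4$ points distributed over three colors. Provided I can arrange that neither $\{p'_3,p''_3,p_1\}$ nor $\{p'_3,p''_3,p_2\}$ is collinear---a ``grid'' condition relative to the parametrization of off-side points by the pencils of lines through $p_1$ and $p_2$---the triangles $T'=\{p_1,p_2,p'_3\}$ and $T''=\{p_1,p_2,p''_3\}$ are heterochromatic, and a short intersection calculation of the six new side-lines against $L_{13}\cup L_{23}$ collapses the combined constraint from $T$, $T'$, $T''$ to $A_c \subseteq L_{12}$ for every $c \geq 4$. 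Since $L_{12}$ has only $q-1$ non-vertex points but must host at least one representative of each of the $q$ colors in $\{4,\ldots,q+3\}$, this is a contradiction. The hard part will be arranging the non-collinearity condition: it holds automatically for $q\geq 5$ by the pigeonhole bound $\lceil(q-1)^2/3\rceil \geq q$ and requires a finer analysis of the $(q-1)\times(q-1)$ off-side grid for $q=3$ and $q=4$, possibly by switching which color plays the role of color $3$. The remaining boundary case $q=2$ is the Fano plane, where $(q-1)^2=1$ off-side point makes the density argument vacuous; it can be disposed of by direct examination of the two possible color-class partitions $(3,1,1,1,1)$ and $(2,2,1,1,1)$, exhibiting in each one of the seven Hamiltonian circuits (complements of lines) as heterochromatic.
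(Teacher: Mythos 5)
Your lower bound matches the paper's (via Lemma~\ref{lemma:hamil_circuits}), and several pieces of your upper-bound plan are sound: the existence of a heterochromatic triangle, the dichotomy for each such triangle, and the collapse computation itself --- given the ``grid'' condition, the only candidate points off $L_{12}$ surviving the constraints from $T$ and $T'$ are $L_{13}\cap L'_{23}$ and $L_{23}\cap L'_{13}$, and $T''$ does eliminate them. But there is a genuine gap exactly where you flag it: your pigeonhole bound $\lceil (q-1)^2/3\rceil \geq q$ only guarantees a good pair $\{p'_3,p''_3\}$ for $q\geq 5$, and for $q=3,4$ the bad configurations are real, not bookkeeping. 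For $q=3$ the four off-side points can be split $(2,2,0)$ with each monochromatic pair collinear with a triangle vertex (a line through a vertex carries $q-1=2$ off-side points), and for $q=4$ a split $(3,3,3)$ with each class lying on a single line through a vertex defeats the grid condition for every choice of repeated color; ``switching which color plays the role of color $3$'' does not obviously break either configuration, so one would need a further idea (e.g., changing the base triangle) that the proposal does not supply. The Fano case $q=2$ does check out by the direct analysis you sketch.

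More importantly, you missed the move that makes the whole off-side analysis unnecessary, and it is the paper's entire proof. Instead of a rainbow triangle (an independent triple), start from a rainbow \emph{3-circuit}: three collinear points $a,b,c$ of distinct colors, which exist under any coloring with at least $4$ colors by Theorem~\ref{th:proj_3_circuit_rank_2}, and $q+3>4$. The line $L$ through them carries at most $q+1$ of the $q+3$ colors, so two colors appear only off $L$; pick points $d,e$ of those colors. The line $de$ meets $L$ in a single point, so at least one of the pairs $\{a,b\}$, $\{a,c\}$, $\{b,c\}$ avoids that point, and that pair together with $\{d,e\}$ is four points, no three collinear, in four distinct colors --- a heterochromatic Hamiltonian circuit. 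Starting from a collinear triple means the two new points need only avoid one line rather than the three sides of a triangle, which is precisely what eliminates your case analysis uniformly in $q$; your approach, by contrast, forces the fourth point into the $(q-1)^2$-point off-side region and inherits all the small-$q$ degeneracies.
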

 \begin{proof}
    Because $q+3>4$, by Theorem~\ref{th:proj_3_circuit_rank_2}, there is a line $L$ with 3 points, say $a$, $b$ and $c$, that have all
     different colour, say 1,2 and 3, respectively. As the points in $L$ use at most $q+1$ colours, there are two points not in $L$,
      say $d$ and $e$, that use two unused colours, say 4 and 5.  Take the line $L'$ that passes through $d$ and $e$. 
      Because $L$ and $L'$ intersect in exactly one point, one of the pairs, $\{a,b\}$, $\{a,c\}$ or $\{b,c\}$ together 
      with $\{d,e\}$ form a totally multicoloured Hamiltonian circuit.
  \end{proof}

We can say more for the heterocromatic number of Hamiltonian circuits in a particular class of paving matroid. Let us call a rank-$r$  matroid, a \emph{perfect matroid design}, if all its flats of rank $k$ have the same cardinality, for $1\leq k\leq r$. Observe that paving matroids satisfy the condition of being perfect matroid designs  trivially for all values of $k$ different from $r-1$. Thus, when we consider paving matroids that are perfect matroid designs, we are really asking just that the size of all hyperplanes is the same, this is precise the case when the paving matroid is a Steiner system $S(r-1,d,n)$. In fact, the family of matroids in the intersection of paving and perfect matroid designs are precisely the ones coming from  Steiner systems. The reason for this is that the type of a paving matroid that is a perfect matroid design is the one of a Steiner system, and it is known that matroids coming from Steiner systems are characterised by its type. 

The following results was mentioned by David Pike in a private communication, we include his prove for completeness. 
\begin{theorem}
 For a Steiner system $S(t,k,n)$,  with $2\leq t< k< n$, whenever $n > (t+1)(k+1-t)$ then the associated rank-$(t+1)$ paving matroid  has a Hamiltonian circuit.
\end{theorem}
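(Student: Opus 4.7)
I would prove this by a greedy construction. The rank-$(t+1)$ paving matroid has the $k$-blocks as hyperplanes, so a $(t+2)$-subset is a Hamiltonian circuit precisely when none of its $(t+1)$-subsets is contained in a block. The strategy is to build an independent $(t+1)$-set $X_1 = \{x_1, \ldots, x_{t+1}\}$ and then find a single extra point $x_{t+2}$ that avoids every block passing through a $t$-subset of $X_1$.

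To construct $X_1$, take any $t$-subset $X_0$ and let $B_0$ be the unique block containing it; choose $x_{t+1}$ outside $B_0$ (possible since $n>k$) and set $X_1 = X_0 \cup \{x_{t+1}\}$. Any block containing $X_1$ would also contain $X_0$, hence equal $B_0$, contradicting $x_{t+1}\notin B_0$; so $X_1$ is independent. For each $i\in\{1,\ldots,t+1\}$, let $Y_i = X_1 \setminus \{x_i\}$ and let $B_i$ be the unique block containing $Y_i$. If we can find $x_{t+2}\notin B_1\cup\cdots\cup B_{t+1}$, then none of the $(t+1)$-subsets of $X_1 \cup \{x_{t+2}\}$ lies in a block (those containing $x_{t+2}$ are avoided by construction, and $X_1$ itself is independent), and we have a Hamiltonian circuit.

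So the whole proof reduces to bounding $|B_1\cup\cdots\cup B_{t+1}|$. The blocks $B_i$ are pairwise distinct (else two distinct $Y_i$'s would lie in a common block, placing $X_1$ in it). Each $B_i$ meets $X_1$ in exactly $Y_i$ (if $x_i \in B_i$, then $X_1\subseteq B_i$, a contradiction), so $|B_i\setminus X_1| = k-t$. For $i\neq j$ the sets $B_i\setminus X_1$ and $B_j\setminus X_1$ are disjoint: any common point would lie in $B_i\cap B_j$, which by the Steiner property has size at most $t-1$; since $X_1\setminus\{x_i,x_j\}\subseteq B_i\cap B_j$ already has size $t-1$, we conclude $B_i\cap B_j = X_1\setminus\{x_i,x_j\}\subseteq X_1$. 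Therefore
\[
|B_1\cup\cdots\cup B_{t+1}| \;=\; |X_1| + \sum_{i=1}^{t+1}|B_i\setminus X_1| \;=\; (t+1)+(t+1)(k-t) \;=\; (t+1)(k+1-t),
\]
and the hypothesis $n>(t+1)(k+1-t)$ guarantees the required $x_{t+2}$. The only step that needs care is the exact identification of $B_i\cap B_j$, but it is forced once one observes that $X_1\setminus\{x_i,x_j\}$ already saturates the maximum allowed intersection of two distinct blocks in a Steiner system; the rest is bookkeeping.
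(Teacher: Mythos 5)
Your proof is correct and takes essentially the same approach as the paper: build an independent $(t+1)$-set, take the $t+1$ blocks determined by its $t$-subsets, and observe that their union covers at most $(t+1)(k+1-t)$ points, so the hypothesis leaves room for a final point completing a Hamiltonian circuit. The only difference is cosmetic: you sharpen the paper's upper bound on the union into an exact count by proving the sets $B_i\setminus X_1$ are pairwise disjoint, a step the argument does not actually need.
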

\begin{proof}
Start with any fixed $t$-set $T=\{x_1,x_2,\ldots,x_t\}$, which defines one block $B$.
Let $y$ be any point not in $B$, and so $T \cup \{y\}$ is a base set of size $t+1$.  We wish to now prove that there exists a point $z$ such that $T \cup \{y,z\}$ has no circuits of size $t+1$.

Consider the $t$ distinct blocks that are induced by the $t$ $t$-sets of the form $\{y\} \cup (T \setminus \{x_i\})$ where $i=1,2,\ldots,t$.  Together with the block $B$, we have $t+1$ blocks of size $k$ that contain at most $(t+1)(k+1-t)$ distinct points.  So if there is at least one other point, then such a point can be join with $T \cup {y}$ to produce a ($t+2$)-set that contains no circuits.
 \end{proof}

 \begin{theorem}\label{lemma:r+1_circuits}
 If $M$ is a rank-$r$ paving matroid coming from a Steiner system $S(r-1,q,n)$ with $r$ odd, we have that $hc(HC(M))= q+2$
 \end{theorem}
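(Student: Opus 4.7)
The plan is to prove $hc(HC(M)) = q+2$ by matching lower and upper bounds. For the lower bound, apply Lemma~\ref{lemma:hamil_circuits} directly: since $M$ is the paving matroid of $S(r-1,q,n)$, its hyperplanes are exactly the blocks of the Steiner system, each of size $q$, so $hc(HC(M)) > q+1$ and hence $hc(HC(M)) \geq q+2$.

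For the upper bound, fix any $(q+2)$-colouring $\phi$ of the elements of $M$. The plan is to start from a rainbow basis $B_0 = \{x_1,\dots,x_r\}$ of $M$ with distinct colours $c_1,\dots,c_r$; such a basis exists because the paving-matroid result of Section~\ref{paving} gives $hc(B(M))=r$, and $q+2 \geq r$. I then try to extend $B_0$ by a single point $x_{r+1}$ of a colour outside $\{c_1,\dots,c_r\}$ so that $C = B_0 \cup \{x_{r+1}\}$ is a Hamiltonian circuit. Using the defining Steiner-system property that every $(r-1)$-subset lies in a unique block, for each $i\in[r]$ the set $B_0\setminus\{x_i\}$ spans a unique block $B^{(i)}$, and a direct check (using that $M$ is paving) shows $C$ is a Hamiltonian circuit if and only if $x_{r+1}\notin\bigcup_{i=1}^{r} B^{(i)}$. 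So the task reduces to producing a point of a fresh colour outside the union of these $r$ blocks.

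I expect the main obstacle to be the combinatorial estimate needed for this escape point. My plan is to exploit the Fisher-type property of $S(r-1,q,n)$: any two distinct blocks meet in at most $r-2$ points, and in fact $B^{(i)}\cap B^{(j)} = B_0\setminus\{x_i,x_j\}$, a set of exactly $r-2$ points, with analogously controlled higher intersections. Inclusion–exclusion then gives a sharp upper bound on $|\bigcup_i B^{(i)}|$, which combined with the $q+2-r$ still-unused colour classes should force the existence of an escape point of a fresh colour. The hypothesis that $r$ is odd is expected to enter precisely here: the alternating sum has its balance of positive and negative terms such that when $r$ is odd the estimate closes, while for $r$ even it is short by one. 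If the count fails for a particular rainbow basis $B_0$, the backup plan is to vary $B_0$ by swapping some $x_i$ with another element of colour $c_i$, which shifts the corresponding block $B^{(i)}$; an averaging argument over such swaps then exhibits a rainbow basis for which the required escape point exists, completing the proof.
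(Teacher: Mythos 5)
Your lower bound is correct and agrees with the paper, and your reduction is sound: in the paving matroid of $S(r-1,q,n)$ the $r$-element circuits are exactly the $r$-subsets of blocks, so $B_0\cup\{x_{r+1}\}$ is a Hamiltonian circuit if and only if $x_{r+1}$ avoids the $r$ blocks $B^{(i)}$ spanned by the $(r-1)$-subsets $B_0\setminus\{x_i\}$. But the escape-point step, which is the heart of the proof, has a genuine gap. First, the counting is not where the difficulty lies: since $B^{(i)}\cap B^{(j)}=B_0\setminus\{x_i,x_j\}$ exactly, distinct $B^{(i)}$ share no points outside $B_0$, so $\bigl|\bigcup_{i=1}^{r} B^{(i)}\bigr| = r + r(q-r+1)$ on the nose --- there is no alternating sum, and no parity phenomenon can emerge from inclusion--exclusion, so the hypothesis that $r$ is odd cannot enter where you expect it to. Second, and more fatally, no bound on the size of this union can force an escape point of a fresh colour: the colouring is adversarial, and nothing prevents both fresh colour classes from lying entirely inside $\bigcup_i B^{(i)}$ (even inside a single block $B^{(1)}$), no matter how large $n$ is. Your backup ``averaging over swaps'' is not an argument: a swap moves only one of the $r$ blocks, and you supply no mechanism guaranteeing that some rainbow basis admits a fresh-coloured point outside all of its blocks.

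The paper's proof avoids this trap by arguing in the contrapositive from global structure. Assuming an $h$-colouring with no heterochromatic Hamiltonian circuit, it takes a rainbow set $A$ of $h$ points; then every $(r+1)$-subset of $A$ contains an $r$-circuit and hence has $r$ of its points in a block. On each set of $r+2$ points of $A$ it defines a digraph of out-degree one, sending $x_i$ to the point of the remaining $r+1$ left out of their common block. Here is where the oddness of $r$ is actually used: since $r+2$ is odd, this functional digraph cannot be a fixed-point-free pairing into $2$-cycles, so there are arrows $x\to y$ and $y\to z$ with $z\neq x$; the two blocks these arrows produce share an independent set of $r-1$ points and hence coincide (an $(r-1)$-set lies in a unique block), so among any $r+2$ points of $A$ some $r+1$ lie in a common block. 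A short globalization then shows at least $h-1$ points of $A$ lie in one block, giving $h-1\leq q$ and thus $hc(HC(M))\leq q+2$. In short: the parity hypothesis is a statement about functional digraphs on $r+2$ vertices, not about inclusion--exclusion, and the upper bound must be extracted from the hypothesis that \emph{no} Hamiltonian circuit is rainbow, rather than by trying to build one around a fixed rainbow basis.
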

 \begin{proof}
    Lets take $h$ to be the maximum value such that there is a $h$-colouring of the elements of $M$ with no heterocromatic
     Hamiltonian circuit. By Lem\-ma~\ref{lemma:hamil_circuits}, $h\geq q+1$.  We are going to prove that $h\leq q+1$. 
    Let be $A$ a heterocromatic set of $h$ elements. As we do not have heterocromatic Hamiltonian circuits, each set of $r+1$
     elements in $A$ contains a $r$-circuit and these points are in a hyperplane. 
     
    Let us define a digraph for each set of $r+2$ points in $A$ in the following way: The vertices are the $r+2$ points, say 
    $x_1$, $x_2,\ldots x_{r+2}$. For each $x_i$, the remaining  $r+1$ points determined  a hyperplane $H$ that contains 
    at least $r$ of them. Let place the arrow from $x_i$ to the points, say $x_j$, that is not in $H$. If all the $r+1$ points 
    are in $H$, chose $x_j$ arbitrarily.  As $r+2$ is odd and all vertices have out degree 1, there is a cycle of size at least 3. 
    We can suppose, then that we have the arrows, $x_{r+1}x_{r+2}$ and $x_{r+2}x_{r}$. The first arrow implies that 
    $\{x_1,x_2,...x_{r-1}, x_{r}\}$ are in a hyperplane $H_1$, while the second one implies that 
    $\{x_1,x_2,...x_{r-1}, x_{r+1}\}$ are in a hyperplane $H_2$. As both hyperplanes share a, necessarily independent, 
    set of size $r-1$, we conclude that $H_1=H_2$, thus proving that in an heterocromatic set of size $r+2$ in $A$, 
    at least $r+1$ are always in a hyperplane.  

    Now, if for every set of $r+2$ points, they are in a hyperplane, all the elements in $A$ are in a hyperplane. 
    Because, if $\{x_1,x_2,...x_{r+1}, x_{r+2}\}$ are in a hyperplane $H_1$, and $w$ is another point in $A$, 
    then $\{x_1,x_2,...x_{r+1}, w\}$ are also in a hyperplane $H_2$. But $H_1$ and $H_2$ share at least $r+1$ points, 
    so $H_1=H_2$.

   So, there is a subset of $r+2$ points where $\{x_1,x_2,...x_{r}, x_{r+1}\}$ are in a hyperplane $H_1$ but 
   $x_{r+2}$ is not. In this case, let us take a point $w\in A$ and the set $B=\{x_1,x_2,...x_{r}, x_{r+2}, w\}$ of 
   $r+2$ points. As  $r+1$ of the points in $B$ are in a hyperplane $H_2$, these must include the points 
   $x_1,x_2,...x_{r-1}$ and $x_{r}$. Then, $H-1$ and $H_2$ share $r-1$ points at least and $H_1=H_2$. 
   As $x_{r+2}$ is not in $H_1=H_2$, then $w$ should be in $H_1=H_2$.
 
  In both cases we conclude that $A$ has at least $h-1$ points in a hyperplane, thus $h-1\leq q$ and the result follows.
  \end{proof}

 The general problem of finding the heterochromatic number of the hypergraph $C_p(M)$ for $M$ a rank-$r$ matroid and
  $C_p(M)$ the hypergraph with hyperedges the $p$-circuits of $M$ seems quite difficult, and even the case when $M$ is the
   graphic matroid of the complete graph took several years to be solved completely. Here we consider the case when $M$ is
    the projective geometry $PG(r-1,q)$. 
 
 The construction of a colouring with as many colours as possible but no heterochromatic edge in $C_p(M(K_n))$ given 
 in~\cite{ErSiSo1973} can be described in terms of the matroid $M(K_n)$ as follows: For a $3\leq p\leq n$, take a flat $F$ as 
 big as possible in $M(K_n)$ with no $p$-circuit and totally multicolored these elements. Now, consider a colouring of 
 $M(K_n)/F$ with a maximum number $s$ of colours and no totally multicoloured circuit. The claim is that $|F|+s$ is the
  heterochromatic number of $C_p(M(K_n))$ minus 1, i.e. it is the anti-Ramsey number $ar(n,C_p)$. 
  
  We propose a similar construction for $PG(r-1,q)$. Let $F$ be a flat of maximum size with the property of not containing a $p$-circuit as a restriction. Now, consider a colouring of $PG(r-1,q)/F$ with a maximum number $s$ of colours and no totally multicoloured circuit. This colouring does not have heterochromatic $p$-circuits, because the circuits of the matroid $M/F$ are the minimal non-empty sets in $\{C\setminus F: C \text{ is a circuit of}\ M\}$, so a $p$-circuit in $M$ contains a circuit of $M/F$. Thus, a $p$-circuit of $M$ contains a subset that is not totally multicoloured. 
   
   We conclude that the number $F|+s+1$ is a lower bound for $hc( C_p(PG(r-1,q)))$. In this particular case, $F$ is isomorphic to a projective geometry $PG(p-3,q)$, because all flats of a projective geometry are projective geometries and all projective geometries are Hamiltonian. Also, $PG(r-1,q)/F$ is isomorphic to a projective geometry $PG(r-p+2,q)$ with parallel elements but no loops. Then, our Theorem~\ref{Th:hete_circuit}  said that  $s=r-p+1$. This give a lower bound of 
   $\frac{q^{p-2}-1}{q-1}+(r-p+1)+1$ for the heterocromatic number of $p$-circuits.
   
   \begin{conjecture}
   Given intergers $3\leq p\leq r+1$ and a primer power $q$, the heterocromatic number of $C_p(PG(r-1,q))$ is 
   $\frac{q^{p-2}-1}{q-1}+(r-p+1)+1$
   \end{conjecture}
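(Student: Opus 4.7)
The lower bound is already established by the construction preceding the conjecture, so the substantive task is the matching upper bound: any colouring of $PG(r-1,q)$ using the conjectured number of colours contains a totally multicoloured $p$-circuit. The plan is a double induction on $p$ and $r$, with base cases $p=3$ (Theorem~\ref{th:proj_3_circuit_rank_r}) and $p=r+1$; for the latter, the Hamiltonian case, only the rank-$3$ instance is settled in the present paper, so the general rank will itself need to be handled as a preliminary step.

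For the inductive step, the number of colours comfortably exceeds $r+1$, so Theorem~\ref{Th:hete_circuit} yields a totally multicoloured circuit $C$. If $|C|=p$ we are done. If $|C|<p$, the flat $F=\mathrm{cl}(C)$ is isomorphic to $PG(|C|-2,q)$; I would contract it, push the colouring forward to $M/F$ (whose simplification is a lower-rank projective geometry), and invoke the inductive hypothesis with circuit-size parameter $p-|C|+2$ to find a circuit in the quotient that lifts to the required $p$-circuit in $M$. The arithmetic of the conjectural formula is compatible with this reduction, though verifying the compatibility carefully is a non-trivial bookkeeping exercise.

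The hard case is $|C|>p$, where $C$ must be shortened to a $p$-circuit while preserving the rainbow property. Working inside $\langle C\rangle \cong PG(|C|-2,q)$, I would iterate the line-meets-hyperplane trick used in Theorems~\ref{th:proj_3_circuit_rank_2} and~\ref{th:proj_3_circuit_rank_r}: pick $e_1,e_2\in C$, intersect the line through them with the hyperplane of $\langle C\rangle$ spanned by $C\setminus\{e_1,e_2\}$, and use the resulting auxiliary point $a$ to produce a shorter circuit through $(C\setminus\{e_1,e_2\})\cup\{a\}$. The obstruction is that $a$ may carry a colour already appearing in $C\setminus\{e_1,e_2\}$, in which case the shorter circuit is not rainbow. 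Overcoming this requires a delicate extremal argument that bounds the number of colour classes meeting $\langle C\rangle$ and lets one choose $\{e_1,e_2\}$ adaptively so that $a$ falls in an unused class --- precisely the kind of combinatorial difficulty that kept $ar(n,C_p)$ open for three decades until Montellano and Neumann-Lara. In the projective setting one might hope to exploit the rigidity of $GF(q)$-linear dependence, for instance via the symmetric-difference device of the binary lemma between Theorems~\ref{th:proj_3_circuit_rank_r} and~\ref{lemma:hamil_circuits}; nevertheless, this shrinking step is where the bulk of the work will lie.
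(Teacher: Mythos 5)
This statement is a \emph{conjecture}: the paper proves only the lower bound, via the flat-plus-contraction colouring described immediately before it, and offers no upper-bound argument at all. Your proposal does not close that gap either. You correctly identify the lower bound and the shape of an upper-bound induction, but the decisive case $|C|>p$ --- shrinking a rainbow circuit to a rainbow $p$-circuit --- is precisely where you stop, conceding that ``the bulk of the work will lie'' there. That concession is the whole problem: it is the matroid analogue of the step that kept $ar(n,C_p)$ open for thirty years, and nothing you cite supplies it. In particular the binary lemma you invoke only shortens a \emph{Hamiltonian} rainbow circuit, only over $GF(2)$, and only under an $(r+1)$-colouring hypothesis; it does not give the adaptive choice of $\{e_1,e_2\}$ forcing the auxiliary point $a$ into an unused colour class. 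So the proposal is a research plan, not a proof, of an open statement.

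Beyond the deferred core, two steps you do commit to would fail as written. First, your opening move --- ``the number of colours comfortably exceeds $r+1$, so Theorem~\ref{Th:hete_circuit} yields a rainbow circuit'' --- is false at $p=3$: the conjectured value is then $\frac{q-1}{q-1}+(r-2)+1=r$, strictly below the $r+1$ threshold of Theorem~\ref{Th:hete_circuit}, and indeed in conflict with Theorem~\ref{th:proj_3_circuit_rank_r}, which proves the answer is $r+1$. (The printed formula appears to carry an off-by-one: contracting $F\cong PG(p-3,q)$ of rank $p-2$ leaves rank $r-p+2$, so $s$ should be $r-p+2$, not $r-p+1$; any induction must first pin down the intended constant.) Second, in the case $|C|<p$ the induced colouring on $M/F$ with $F=\mathrm{cl}(C)$ may merge colour classes arbitrarily, so the quotient need not carry the number of colours your inductive hypothesis requires; and a circuit $D$ of $M/F$ lifts only to some circuit $C''$ of $M$ with $D\subseteq C''\subseteq D\cup F$, where $|C''|$ is not controlled to equal $p$ and the $F$-points of $C''$ have unconstrained colours, since the rainbow circuit $C$ accounts for only $|C|$ of the $\frac{q^{|C|-1}-1}{q-1}$ points of $F$. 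Neither ``size exactly $p$'' nor ``rainbow'' survives the lift without substantial further argument, so even the cases you regard as routine are open.
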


\section{Heterochromatic number and the Erd\"os-Stone theorem}
 In~\cite{GlNl2015}, Geelen and Nelson prove an analogue of the Erd\"os-Stone theorem for projective.
  geometries. For a set $M$ of points in $PG(m-1,q)$, for some fixed $m$, let $ex_q(M,n)$ denote the maximum size of a collection of points in $PG(n-1,q)$ not containing a copy of $M$, up to projective equivalence. They show that
  	\begin{equation}\label{eq:asymtotic_behaviour}
		\lim_{n\to \infty} \frac{ex_q(M;n)}{|PG(n-1,q)|}=1-q^{1-c},
	\end{equation}
  where $c=c(M)$ is the smallest no negative integer such that $\chi(M;q^c)>0$ and $\chi(M,\lambda)$ is the characteristic polynomial of the matroid. With this result in hand, we can proceed as in similar fashion as in~\cite{ErSiSo1973}. Let $H$ be a submatroid of $PG(m-1,q)$, for some fixed $m$, such that for all pair of elements $e,f\in H$, $H\setminus e\cong H\setminus f$. This would happen in many instances, for example when $H$ has transitive automorphism group. Let us consider the problem of finding the heterochromatic number of the hypergraph $S_H(PG(n-1,q))$, the hypergraph with elements the points of $PG(n-1,q)$ and the hyperedges, the elements of a copy of $H$ in $PG(n-1,q)$. We always  assume that $n$ is big enoung so that the hypergraph has non-empty hyperedges. 
  
  In order to bound the heterochromatic number of the hypergraph $S_H(PG(n-1,q))$, let us consider $H'=H\setminus e$ for some fixed $e\in H$. Take an extremal matroid $L^{-}$ of size $ex_q(H',n)$ in $PG(n-1,q)$.  the points in the complement respect to  $PG(n-1,q)$ is a 2-transversal in $S_H(PG(n-1,q))$, thus $ex_q(H',n)+1\leq hc(S_H(PG(n-1,q)))$. Second, let us consider $L^{+}=H\oplus_2 H$, the 2-sum of $H$ with itself with base point $e$. As $H$ is a submatroid of $PG(m-1,q)$, for some fixed $m$, the matroid $L^{+}$ is a submatroid of $PG(n-1,q)$, for a big enough $n$. If we colour $PG(n-1,q)$ with $ex_q(L^{+},n)+1$ colours, there is a totally multicoloured copy of $L^{+}$ and, independently of the colour that $e$ receives, there will be a multicoloured copy of $H$. Thus, $hc(S_H(PG(n-1,q)))\leq ex_q(L^{+},n)+1$.

  If the critical exponent of $H'=c'$, then the critical exponent of $H\oplus_2 H$ is also $c'$. This follows from the following result that is folklore. 
  
  \begin{lemma}
  For a loopless matroid $M$, the critical exponent of $M\oplus_2 M$ with basepoint $z$ equals $c(M\setminus z)$
  \end{lemma}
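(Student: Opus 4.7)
The plan is to convert the claim into identities between characteristic polynomials, evaluate at $\lambda=q^c$, and exploit the non-negativity of $\chi(\,\cdot\,;q^c)$ on loopless matroids representable over $GF(q)$.

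First, I would invoke Brylawski's classical formula for the parallel connection $P=P(M_1,M_2)$ at basepoint $z$,
\[
(\lambda-1)\,\chi(P;\lambda)\;=\;\chi(M_1;\lambda)\,\chi(M_2;\lambda),
\]
together with the standard identifications $P\setminus z=M_1\oplus_2 M_2$ and $P/z=(M_1/z)\oplus(M_2/z)$. Applying deletion-contraction at the basepoint in $P$ and then setting $M_1=M_2=M$ yields
\[
\chi(M\oplus_2 M;\lambda)\;=\;\frac{\chi(M;\lambda)^{2}}{\lambda-1}\;+\;\chi(M/z;\lambda)^{2},
\]
while the same deletion-contraction inside $M$ itself gives $\chi(M\setminus z;\lambda)=\chi(M;\lambda)+\chi(M/z;\lambda)$.

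Next, I would specialise to $\lambda=q^c$ with $c\ge 1$, so that $q^c-1>0$. The Crapo--Rota critical theorem guarantees $\chi(N;q^c)\ge 0$ for every loopless matroid $N$ representable over $GF(q)$, and $\chi(N;q^c)=0$ whenever $N$ has a loop. Consequently both $\chi(M;q^c)$ and $\chi(M/z;q^c)$ are non-negative, so the two displayed formulas yield the chain of equivalences
\[
\chi(M\oplus_2 M;q^c)>0 \;\iff\; \chi(M;q^c)>0\ \text{or}\ \chi(M/z;q^c)>0 \;\iff\; \chi(M\setminus z;q^c)>0.
\]
Reading off the minimum admissible $c$ on each side yields $c(M\oplus_2 M)=c(M\setminus z)$.

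The only non-routine ingredient is the parallel-connection identity itself. If a self-contained derivation is desired, I would run the standard counting argument: a valid $GF(q)^{c}$-colouring of $P(M_1,M_2)$ is a pair of valid colourings of $M_1$ and $M_2$ sharing a common non-zero value at $z$, and summing over the $q^c-1$ possible images of $z$ collapses the count to $\chi(M_1;q^c)\chi(M_2;q^c)/(q^c-1)$, establishing the identity as a polynomial relation in $q^c$. Everything after that is bookkeeping with deletion-contraction and the non-negativity of $\chi(\,\cdot\,;q^c)$.
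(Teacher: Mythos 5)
Your proof is correct, and although it opens with the same key identity as the paper --- the Walton--Welsh formula $\chi(M\oplus_2 M;\lambda)=\chi(M;\lambda)^2/(\lambda-1)+\chi(M/z;\lambda)^2$, which you rederive from Brylawski's parallel-connection formula via $P\setminus z=M_1\oplus_2 M_2$ and $P/z=(M_1/z)\oplus(M_2/z)$ --- the way you finish is genuinely different. The paper reads off from that formula an identity for $c(M\oplus_2 M)$ in terms of $c(M)$ and $c(M/z)$ and then runs a case analysis using two auxiliary facts about critical exponents from Brylawski--Oxley (monotonicity under restriction, and that $c(M\setminus S)\leq k$ forces $c(M/T)\leq k$ for some $T\subseteq S$). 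You instead evaluate at $\lambda=q^c$ and use Crapo--Rota nonnegativity: since $\chi(M;q^c)^2/(q^c-1)$ and $\chi(M/z;q^c)^2$ are nonnegative, and $\chi(M\setminus z;q^c)=\chi(M;q^c)+\chi(M/z;q^c)$ is a sum of nonnegatives, positivity of $\chi(M\oplus_2 M;q^c)$ and of $\chi(M\setminus z;q^c)$ are each equivalent to ``$\chi(M;q^c)>0$ or $\chi(M/z;q^c)>0$'' for every $c\geq 1$ simultaneously, so the two sets of admissible $c$ coincide exactly and no monotonicity argument or case analysis is needed. This is cleaner and more self-contained, and it quietly repairs a wrinkle in the paper: the evaluation argument shows that both exponents equal $\min\{c(M),c(M/z)\}$, whereas the paper asserts $c(M\oplus_2 M)=\max\{c(M),c(M/z)\}$ before its case analysis, an intermediate claim your route simply never needs. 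Two hypotheses you invoke deserve a word, both consistent with the paper's setting: the nonnegativity $\chi(N;q^c)\geq 0$ genuinely requires $GF(q)$-representability (it fails, e.g., for $U_{2,4}$ at $q=2$, $c=1$), which is implicit in the paper since everything lives inside $PG(n-1,q)$ and which you rightly make explicit; and your deletion--contraction steps need $z$ to be neither a loop nor a coloop, which is guaranteed by the standing convention that the $2$-sum is defined at $z$.
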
 
  \begin{proof}
   Walton and Welsh in~\cite{WlWl1980}  give the following expression for the 2-sum of matroids $M_1$ and $M_2$ with basepoint $z$, 
  \begin{equation}
   \chi(M;\lambda)=\frac{\chi(M_1;\lambda)\chi(M_2;\lambda)}{\lambda-1}
                              +\chi(M_1/z;\lambda)\chi(M_2/z;\lambda).
\end{equation}
Form this we see that  $c(M\oplus_2 M)=\max\{c(M), c(M/z)\}$. Now, there are two basic results from~\cite{BO93} about critical exponent that will help us find the critical exponent of $M\oplus_2 M$. First, for any loopless matroid $M$ and a subset $T$ of $E(M)$, $C(M|T)\leq C(M)$. Second, for a a loopless matroid $M$ and a subset $S$ of $E(M)$ such that $c(M\setminus S)\leq k$,  there exists $T\subseteq S$ such that $C(M/T)\leq k$. For our case, 
if either $c(M\setminus e)=c(M)$ or $c(M\setminus e)=c(M/e)$, then $c(M\setminus e)=c(M\oplus_2 M)$. If $c(M\setminus e)<c(M)$ and $c(M\setminus e)<c(M/e)$, then we contradict the second property. 
\end{proof}

 Now, using Equation~\ref{eq:asymtotic_behaviour}, the inequality $ex_q(H',n)+1\leq hc(S_H(PG(n-1,q)))\leq ex_q(H\oplus_2 H,n)+1$ and that $H\setminus e$ and $H\oplus_2 H$ have the same critical exponent $c$, we get that 
 \begin{equation}\label{eq:asymtotic_behaviour_hc}
		\lim_{n\to \infty} \frac{hc(S_H(PG(n-1,q)))}{|PG(n-1,q)|}=1-q^{1-c},
	\end{equation}

\section{Conclusion}
 In this work we try to explore the natural generalization of the anti-Ramsey number for graphs to matroids, using the known generalization of heterochromatic number in hypergraphs. We manage to prove some of the  basic results from~\cite{ErSiSo1973} in the setting of projective geometries. We also obtain some results in the case of circuits and cocircuits for general matroids, thus generalizing the case of cycles and edge-cuts in graphs. We restrict to the clase of paving matroids and  consider the case of bases. Finally, we get partial results on the case of Hamiltonian circuits for matroids coming from Steiner systems.

 Another natural hypergraph coming from a matroid structure is the hypergraph which hyperedges are the flats of size $p$, let us denote this hypergraph by $F(M,p)$. Note that the $p$-flats of a paving matroid $M$ of rank $r$ consists of all the subsets of size $p$, when $p<r-1$, thus $hc( F(M,p))=p$. However, when $p=r-1$, that is, when we have the hypergraph of the hyperplanes of the matroid,  the situation is quite different and this problem included the heterocromatic number of flats of fixed size in $PG(n-1,q)$.  However, there has been development in the area of extremal matroid theory regarding this problem, for example, it is proved in~\cite{LiLuNlNo2017} that $ex_2(PG(t,2),n)=2^n(1-2^{-t})$ for all $1\leq t<n$.

\end{document}